\newtheorem{theorem}{Theorem}
\newtheorem{lemma}{Lemma}
\newtheorem{remark}{Remark}
\newtheorem{definition}{Definition}
\newtheorem{example}{Example}
\begin{document}

\title{Weak Width of Subgroups}
\author{Rita Gitik}
\email{ritagtk@umich.edu}
\address{ Department of Mathematics \\ University of Michigan \\ Ann Arbor, MI, 48109   }  

\date{\today}

\begin{abstract} We say that the weak width of an infinite subgroup $H$ of $G$ in $G$ is $n$ if there exists a collection of $n$ strongly essentially distinct conjugates

$\{ H, g_1^{-1} H g_1,\cdots, g_{n-1}^{-1} H g_{n-1}  \}$ of  $H$ in $G$ such that the intersection $H \cap g_i^{-1} H g_i$ is infinite for all $1 \leq i \leq n-1$  and $n$ is maximal possible. We prove that a quasiconvex subgroup of a negatively curved group has finite weak width in the ambient group.
We also give examples demonstrating that height, width, and weak width are different invariants of a subgroup.
\end{abstract}
\subjclass[2010]{Primary: 20E45; Secondary: 20F67}
\maketitle

\section{Introduction}

A subgroup $H$ of $G$ is malnormal in $G$ if for any $g \in G$ such that $g \notin H$ the intersection $H \cap g^{-1} H g$ is trivial.
Most subgroups are neither normal nor malnormal, so the study of the intersection pattern of conjugates of a subgroup is an interesting problem. It is closely connected to the study of the behavior of different lifts of subspaces of topological spaces in  covering spaces. 

Malnormality of a subgroup has been generalized in different ways. One of them, namely the height, introduced  in \cite{G-R}, has been used by Agol in \cite{Ag}  and \cite{A-G-M} in his proof of  Thurston's conjecture that $3$-manifolds are virtual bundles. In this
paper we introduce yet another generalization of malnormality. It is a new invariant of the conjugacy class of a subgroup $H$ of $G$, which we call the weak width of a subgroup. Like malnormality, the weak width measures only the cardinality of the intersections of $H$ with its conjugates in $G$. In section $4$ we prove that quasiconvex subgroups of negatively curved groups have finite weak width, which might simplify Agol's proof. In section $2$ we review the definitions and the basic properties of the width and the height of a subgroup. In section $3$ we give examples showing that height, width, and weak width are different invariants of a subgroup.

\begin{remark} \label{R:ShortRep}
 Note that if $g_i \in Hg_jH$, hence $g_i=h_1g_jh_2$ with $h_1$ and $h_2$ in 
$H$, then  $H \cap g_i^{-1}Hg_i = H \cap (h_1g_jh_2)^{-1}H(h_1g_jh_2)= H \cap (h_2^{-1}g_j^{-1}Hg_jh_2)=
h_2^{-1}(H \cap g_0^{-1}Hg_0)h_2$. So the cardinality of the set $H \cap g_i^{-1}Hg_i$ is equal to the cardinality of the set $H \cap g_j^{-1}Hg_j$. 
\end{remark}

Remark \ref{R:ShortRep} motivates the following definitions.

\begin{definition}
Let $H$ be a subgroup of a group $G$. We say that the elements $\{g_i| 1 \leq i \leq n \}$ of $G$ are strongly $H$-essentially distinct if $Hg_iH \neq Hg_jH$ for $i \neq j$. Conjugates $g_i^{-1} H g_i$ of $H$ by strongly $H$-essentially distinct elements are called strongly essentially distinct conjugates.
\end{definition}

\begin{definition}
We say that the weak width of an infinite subgroup $H$ of $G$ in $G$, denoted $WeakWidth(H,G)$, is $n$ if there exists a collection of $n$ strongly essentially distinct conjugates $\{ H, g_1^{-1} H g_1,\cdots, g_{n-1}^{-1} H g_{n-1} \}$ of  $H$ in $G$ such that the intersection $H \cap g_i^{-1} H g_i$ is infinite for all $1 \leq i \leq n-1$  and $n$ is maximal possible. We define the weak width of a finite subgroup of $G$ to be $0$.
\end{definition}

Note that if $WeakWidth(H,G)=n$, then in any set of $n+1$ strongly essentially distinct conjugates  $ \{H, g_1^{-1} H g_1,\cdots, g_n^{-1} H g_n \}$ of $H$ in $G$ there exists an element
$g_i^{-1} H g_i$ which has finite intersection with $H$.

\section{Height and width.}
The following definitions were introduced in \cite{G-R} and \cite{G-M-R-S}.

\begin{definition}
Let $H$ be a subgroup of a group $G$. We say that the elements $\{g_i| 1 \leq i \leq n \}$ of $G$ are $H$-essentially distinct if $Hg_i \neq Hg_j$ for $i \neq j$. Conjugates $g_i^{-1} H g_i$ of $H$ by $H$-essentially distinct elements are called essentially distinct conjugates.
\end{definition}

If $g_i$ and $g_j$ are not $H$-essentially distinct, then $g_i^{-1} H g_i = g_j^{-1} H g_j$, hence it is interesting to investigate the intersections of the conjugates of $H$ only if they are $H$-essentially distinct.
However, essentially distinct conjugates need not be distinct. For example, let $G=<a_1, a_2|a_1a_2=a_2a_1>$ be a free abelian group of rank $2$ and let $H=<a_1>$ be a subgroup of $G$. The conjugates $a_2^{-1} H a_2$ and $H$ are essentially distinct, but  $a_2^{-1} H a_2 =H$.

\begin{definition}
We say that the height of an infinite subgroup $H$ of $G$ in $G$, denoted by $Height(H,G)$, is $n$  if there exists a collection of $n$ essentially distinct conjugates of $H$ in $G$ such that the intersection of all the elements of the collection is infinite and $n$ is maximal possible.  We define the height of a finite subgroup of $G$ to be $0$.
\end{definition}

Note that if $Height(H,G) =n$ then the intersection of any set of $n+1$ essentially distinct conjugates of $H$ in $G$ is finite.
It was shown in \cite{G-R} that subgroups of negatively curved groups have finite height in the ambient group.

\begin{definition}
We say that the width of an infinite subgroup $H$ of $G$ in $G$, denoted by $Width(H,G)$, is $n$ if there exists a collection of $n$ essentially distinct conjugates of $H$ in $G$ such that the intersection of any two elements of the collection is infinite and $n$ is maximal possible.  We define the width of a finite subgroup of $G$ to be $0$.
\end{definition}

Note that if $Width(H,G)=n$ then in any set of $n+1$ essentially distinct conjugates of $H$ in $G$ there exist two elements with finite intersection. It was shown in \cite{G-M-R-S} and, later, in \cite{H-W} that quasiconvex subgroups of negatively curved groups have finite width in the ambient group.
\\
It follows from the above definitions that $Width(H,G)$ and $Height(H,G)$ are invariants of the conjugacy class of $H$ in $G$.

Note also that $Height(H, G) \leq Width(H, G)$, however, it is not clear if there is any relationship between  $WeakWidth(H, G)$ and $Width(H, G)$. 

Infinite normal subgroups of infinite index have infinite height, width, and weak width in the ambient group. More generally, if an infinite subgroup has infinite index in its  normalizer, then the
 subgroup has infinite height, width, and weak width in the ambient group.
 
 If $G$ is torsion-free and $H$ is infinite, then $H$ is malnormal in $G$ if and only if $Height(H,G)= Width(H,G)=WeakWidth(H,G)=1$. 

\section{Examples}

The following examples demonstrate that $WeakWidth(H,G)$, $Width(H,G)$, and $Height(H,G)$ are distinct invariants of the conjugacy class of $H$ in $G$. 

Let $X$  be a set and let $X^* = \{x,x^{-1} |x \in X \}$, where for $x \in X$ we define $(x^{-1})^{-1} =x$. Denote the equality of two  words in $X^*$ by $" \equiv "$.  

\begin{example}\label{E:WWneqW}
Let $F$ be a free group of rank $4$ generated by the elements $x_1,x_2, x_3, x_{4}$, let $G = <F, t|t^4 =1, t^{-1} x_i t = x_{(i+1)mod4}| 1 \le i\le 4>$, and let $H_1=<x_1,x_2>$. We claim that $WeakWidth(H_1,G)=3$, but $Height(H_1,G)=Width(H_1, G)=2$.

In order to prove the claim we will list all essentially distinct and all strongly essentially distinct conjugates of $H_1$ in $G$ which have non-trivial intersection with $H_1$.

Let $H_i = <x_i, x_{(i+1)mod4}| 1\le i \le 4 > = t^{(-i+1)}H_1 t^{(i-1)}$ be conjugates of $H_1$ in $G$. As $t^i \notin F$ for $i \not\equiv 0\pmod{4}$, the conjugates $\{H_i| 1\le i \le 4 \}$ are strongly essentially distinct. Note that $H_2 \cap H_1 = <x_2> ,H_4 \cap H_1 =<x_1>, H_3 \cap H_1 = <1>$, and $H_2 \cap H_4 = <1>$.  Hence $WeakWidth(H_1,G) \ge 3$,  $Height(H_1,G) \ge 2$, and $Width(H_1, G) \ge 2$.
        
In order to determine how other conjugates of $H_1$ intersect, we 
consider $g \in G$  such that the intersection $g^{-1}H_1g \cap H_1$ is non-trivial. As we are interested only in essentially distinct conjugates of $H_1$, we can assume that $g$ is a shortest element in the coset $H_1g$.

 As $t$ normalizes $F$, we have $g=wt^k, 0 \le k \le 3$, with $w$ a reduced word in $F$. If $w$ is trivial, then $g^{-1}H_1g= t^{-k}H_1t^k = H_{1+k}$, and the intersection pattern of the subgroups $\{H_i|1 \le i \le 4\}$ is described above.
 
 If $w$ is non-trivial, let $v \in H_1$ be a 
 non-trivial reduced word such that $g^{-1}vg = (t^{-k}w^{-1}) v (w t^k)  \in H_1$. Then $w^{-1} v w \in t^k H_1 t^{-k}  =t^{-(4-k)}H_1t^{4-k}  = H_{(1-k)mod4}$. As $w$ and $v$ are reduced words in a free group $F$, there exist decompositions $w \equiv w_1w_2$ and $v \equiv w_1v_0w_1^{-1}$ (where $\equiv$ denotes  equality of words) with 
 
 $w^{-1} v w = (w_2^{-1}w_1^{-1})(w_1v_0w_1^{-1})(w_1w_2) = w_2^{-1}v_0w_2$, where $w_2^{-1}v_0w_2$ is a reduced word in $H_{(1-k)mod4}$. Then $v_0 \in H_{(1-k)mod4}$ and  $w_2 \in H_{(1-k)mod4}$. As  $v \in H_1$, it follows that $w_1 \in H_1$ and $v_0 \in H_1$.
However, as $g=wt^k =w_1w_2t^k$ is shortest in the coset 
 $H_1g$, $w_1$ should be trivial. Hence $w=w_2 \in H_{(1-k)mod4}$. As a non-trivial word $v_0$ belongs to $H_1 \cap H_{(1-k)mod4}$, it follows that $(1-k)\pmod{4}$ is equal to either $1,2$ or $4$. Hence if $(1-k) \pmod4 \equiv3$, so $k=2$, then for any $r \in F$ the intersection $ (rt^2)^{-1}H_1(rt^2) \cap H_1$ is trivial. 

If $(1-k)\pmod{4} \equiv1$ then $w=w_2 \in H_1$, contradicting again the fact that $g$ is shortest in the coset $H_1g$. Hence either $(1-k)\pmod{4} \equiv 2$ and $k=3$, or $(1-k)\pmod{4} \equiv4$ and $k=1$.

If $k=3$, then $g=wt^3$ with $w \in H_2$. Note that the essentially distinct elements of the infinite collection of the conjugates $\{(wt^3)^{-1}H_1(wt^3)| w \in H_2 \}$ intersect each other trivially. Indeed, consider $w_0 \in H_2$ and $w \in H_2$ such that the intersection $(t^{-3}w^{-1})H_1(wt^3) \cap (t^{-3}{w_0}^{-1})H_1(w_0t^3)$  is non-trivial. Then the intersection $H_1 \cap (w_0t^3)(t^{-3}w^{-1})H_1(wt^3)(t^{-3} {w_0}^{-1})$ is  non-trivial. As $H_1$ is malnormal in $F$, it follows that  $w_0w^{-1}=(w_0t^3)(t^{-3}w^{-1}) \in H_1$, so the conjugates 
$(t^{-3}w^{-1})H_1(wt^3)$ and $(t^{-3}{w_0}^{-1})H_1(w_0t^3)$ are not essentially distinct. Therefore the family of the conjugates
$\{(wt^3)^{-1}H_1(wt^3)| w \in H_2 \}$
does not contribute to $Width(H_1,G)$.

Similarly, if $k=1$, hence $g=ut$ with $u \in H_4$, the  essentially distinct elements of the infinite collections of the conjugates $\{(ut)^{-1}H_1(ut)| u \in H_4 \}$ intersect each other trivially.

Also for $w \in H_2$ and $u \in H_4$ the intersection $(t^{-3}w^{-1})H_1(wt^3) \cap (t^{-1}u^{-1})H_1(ut)$ is trivial. Indeed, the cardinality of that intersection is equal to the cardinality of the intersection
$(ut)(t^{-3}w^{-1})H_1(wt^3)(t^{-1}u^{-1}) \cap H_1$. However, $(wt^3)(t^{-1}u^{-1})=wt^2u^{-1}=(w(t^2u^{-1}t^{-2})t^2 = rt^2$ with $r \in F$, and we have mentioned above that for all $r \in F$ the intersection $ (rt^2)^{-1}H_1(rt^2) \cap H_1$ is trivial. So the infinite family of conjugates $\{(ut)^{-1}H_1(ut)| u \in H_4 \}$
does not contribute to $Width(H_1,G)$, therefore $Height(H_1,G)= Width(H_1,G)=2$. 

Note that for any $w \in H_2$,  $wt^3=t^3(t^{-3}wt^3) \in 
t^3H_1 \subseteq H_1t^3H_1$, hence all the elements   $\{(wt^3)| w \in H_2 \}$  are strongly $H_1$-equivalent to $t^3$, so the conjugates of $H_1$ by those elements do not contribute to the weak width of $H_1$. Similarly, all the elements $\{(ut)^{-1}| u \in H_4 \}$ are strongly $H_1$-equivalent to $t$, so the conjugates of $H_1$ by those elements  do not contribute to the weak width of $H_1$ either.
Therefore, $WeakWidth(H,G)=3$.
 
$\hfill\square$
\end{example}

\begin{example}\label{WneH}
Let $G$ be as in Example \ref{E:WWneqW}, and let $L_1=<x_1,x_2, x_3>$. We claim that $WeakWidth(L_1,G)=Width(L_1, G)=4$, but $Height(L_1,G)=3$.

Let $L_i = <x_i, x_{(i+1)mod4}, x_{(i+2)mod4}| 1\le i \le 4 > = t^{(-i+1)}L_1 t^{(i-1)}$ be conjugates of $L_1$ in $G$. As $t^i \notin F$ for $i \not\equiv 0\pmod{4}$, the conjugates $\{L_i| 1\le i \le 4 \}$ are strongly essentially distinct. 
By observation, the elements of the set $\{ L_i| 1 \le i \le 4 \}$ have infinite pairwise intersections, hence
$WeakWidth(L_1,G) \ge 4$ and $Width(L_1, G) \ge 4$. Also the intersection $\bigcap\limits_{i =1}^3 L_i$ is infinite, so $Height(L_1,G) \ge 3$. Note also that
the intersection $\bigcap\limits_{i =1}^4 L_i$ is trivial. 

Using the same argument as in Example \ref{E:WWneqW} we can show that there are only three families of $L_1$-essentially distinct  elements in $G$ such that the conjugates of $L_1$ by these elements intersect $L_1$ non-trivially. They are $\{(wt^3)| w \in L_2 \},
\{ ut| u \in L_4 \}$, and $ \{ st^2| s \in L_3 \}$. Just as in Example \ref{E:WWneqW}, the malnormality of $L_1$ in $F$ implies that the essentially distinct conjugates in each family intersect each other trivially, hence $Width(L_1, G)=4$. Also as in Example \ref{E:WWneqW} these elements are strongly $L_1$-essentially equivalent to $t^3, t$, and $t^2$, respectively, so $WeakWidth(L_1,G)=4$.

Suppose $Height(H,G) \ge 4$. Then there are $3$ essentially distinct conjugates $M_2, M_3$, and $M_4$ of $L_1$ such that the intersection $L_1 \cap (\bigcap\limits_{i=2}^4 M_i)$ is infinite. The preceding paragraph implies that the $M_i$'s must come one from each of  
the families of conjugates of $L_1$ described above, i.e. $M_2, M_3$ and $M_4$ are conjugates of $L_1$ by $wt^3, ut$, and $st^2$ respectively, with $w \in L_2, u \in L_4$, and $s \in L_3$. Let $h_1, h_2, h_3$, and $h_4$ in $L_1$ be such that $h_4 =t^{-3}w^{-1}h_1wt^3=t^{-2}s^{-1}h_2st^2=t^{-1}u^{-1}h_3ut$. Note that $t^{-3}wt^3 \in L_1, t^{-3}h_1t^3 \in L_4, t^{-2}st^2 \in L_1, t^{-2}h_2t^2 \in L_3, t^{-1}ut \in L_1$, and $t^{-1}h_3t \in L_2$. Then $t^{-3}w^{-1}h_1wt^3=r^{-1}_1q_1r_1$ with $r_1 \in L_1$ and $q_1 \in L_4$, $t^{-2}s^{-1}h_2st^2=r_2^{-1}q_2r_2$ with $r_2 \in L_1$ and $q_2 \in L_3$, and $t^{-1}u^{-1}h_3ut = r_3^{-1}q_3r_3$ with $r_3 \in L_1$ and $q_3 \in L_2$. 
As  $r^{-1}_1q_1r_1 = r_2^{-1}q_2r_2 = r_3^{-1}q_3r_3$,
 it follows that $q_2 = l_1^{-1}q_1l_1=l_2^{-1}q_3l_2$ with $l_1$ and $l_2 $ in $L_1$. We can assume that all the words $l_1, l_2, q_1, q_2$, and $q_3$ are reduced. Then, as in Example \ref{E:WWneqW}, there exist decompositions
$l_1 \equiv p_1p_2$ and $q_1 \equiv p_1q'_1p_1^{-1}$ such that $q_2 = (p_1p_2)^{-1}(p_1q'_1p_1^{-1})(p_1p_2)=p_2^{-1}q'_1p_2$, and $p_2^{-1}q'_1p_2$ is a reduced word in $F$.

 As 
$r^{-1}_1q_1r_1 = r_2^{-1}q_2r_2 = r_3^{-1}q_3r_3 = h_4 \in L_1$, it follows that $q_1 \in L_1 \cap L_4 =<x_1, x_2>, q_2 \in L_1 \cap L_3 = <x_1, x_3>$, and $q_3 \in L_1 \cap L_2 = <x_2, x_3>$. As $q_1 \in <x_1, x_2>$ and $q_2 \in <x_1, x_3>$, it follows that $q'_1 = x_1^n$ for $ n \in \textbf{N}$. 

Simirlarly, there exist decompositions $l_2 \equiv c_1c_2$ and $q_3 \equiv c_1q'_3c_1^{-1}$ such that $q_2 = (c_1c_2)^{-1}(c_1q'_3c_1^{-1})(c_1c_2)=c_2^{-1}q'_3c_2$, and $c_2^{-1}q'_3c_2$ is a reduced word in $F$. As $q_3 \in <x_2, x_3>$ and $q_2 \in <x_1, x_3>$, it follows that $q'_3 = x_3^m$ for $m \in \textbf{N}$. Then a conjugate of $q'_1 = x_1^n$ is equal to a conjugate of $q'_3 = x_3^m$ in a free group $F$. This can happen only if  $q'_1$ and $q'_3$ are trivial, hence $q_2$ is trivial. Therefore, the intersection of $L_1$ with all three families of conjugates is trivial, so $Height(L_1, G)=3$.

$\hfill\square$
\end{example}

\section{Quasiconvex subgroups of negatively curved groups have finite weak width}

We will use the following notation.

Let $G$ be a group generated by the set $X^*$.
As usual, we identify the word in $X^*$ with the corresponding element in $G$. Let $Cayley(G)$ be the Cayley graph of $G$ with respect to the generating set $X^*$. The set of vertices of $Cayley(G)$ is $G$,  the set of edges of $Cayley(G)$ is $G \times X^*$, and the edge $(g,x)$ joins the vertex $g$ to $gx$.

\begin{definition} 
The label of the path  
$p=(g,x_1)(gx_1,x_2)  \cdots (gx_1x_2 \cdots x_{n-1},x_n)$ 
in $Cayley(G)$ is the word $Lab (p) \equiv x_1 \cdots x_n $.
The length of the path $p$, denoted by $|p|$, is the number of edges forming it. The inverse of a path $p$ is denoted by $\bar p$, 
\end{definition}

\begin{remark}\label{R:ShiftQuasiconv}
Let $H$ be a $K$-quasiconvex subgroup of $G$, let $\eta$ be a geodesic in $Cayley(G)$ with $Lab(\eta) \in H$, and let $\eta'\eta''$ be any decomposition of $\eta$. There exists a path $c$ with $|c|\le K$ which begins at the terminal vertex of $\eta'$ such that $Lab (\eta'c) \in H$ and $Lab(\bar c \eta'') \in H$. Indeed, if $\eta$ begins (and , hence, ends) at an element of $H$ such $c$ exists by the definition of $K$-quasiconvexity. In the general case, we can find such $c$ using translation in $Cayley(G)$. 
\end{remark}

The following result was essentially proven in \cite{G-M-R-S}. We include a streamlined version of the proof.

\begin{theorem}\label{T:FiniteWeakWidth}
 If $H$ is a quasiconvex subgroup of a negatively 
curved  group $G$, then $WeakWidth(H,G)$ is finite. 
\end{theorem}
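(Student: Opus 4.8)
## Proof Proposal

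The plan is to follow the template of the bounded-coset-representative argument from \cite{G-M-R-S}, adapted to strongly essentially distinct conjugates. The core idea is that if $H \cap g^{-1}Hg$ is infinite, then it contains an element $h$ of arbitrarily large length, and the geodesic quadrilateral in $Cayley(G)$ with sides labelled (roughly) $g$, $h$, $\bar g$, and an element of $H$ must, by thinness of the negatively curved group, have long subsegments of the two $g$-labelled sides fellow-traveling. Using $K$-quasiconvexity of $H$ (and Remark \ref{R:ShiftQuasiconv} to shift basepoints freely), I would extract from this fellow-traveling a path $c$ of length at most some constant $K'$ depending only on $K$ and the thinness constant $\delta$, such that $Lab(\bar c \, g) $ can be ``pulled back'' — i.e., $g$ is within bounded distance of an element $g_0$ with $H \cap g_0^{-1} H g_0$ still infinite and with $|g_0|$ controlled. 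The upshot is a uniform bound $N = N(K,\delta, |X|)$ on the number of double cosets $Hg_iH$ that can occur among conjugates having infinite intersection with $H$.

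The steps, in order. First, fix a generating set $X^*$ and a thinness constant $\delta$ for $Cayley(G)$, and a quasiconvexity constant $K$ for $H$; set $\lambda = K + 2\delta$ or some similar explicit combination. Second, suppose $WeakWidth(H,G) \ge n$ with witnesses $\{H, g_1^{-1}Hg_1, \dots, g_{n-1}^{-1}Hg_{n-1}\}$, and for each $i$ choose $g_i$ to be a shortest element of its double coset $Hg_iH$ (legitimate by Remark \ref{R:ShortRep}). Third, for each $i$ pick a nontrivial $h_i \in H \cap g_i^{-1}Hg_i$ with $|h_i|$ as large as we like (possible since the intersection is infinite), so $g_i h_i g_i^{-1} = h_i' \in H$. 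Fourth, consider in $Cayley(G)$ the geodesic quadrilateral with vertices $1$, $g_i$, $g_i h_i$, $h_i'$; thin-triangles/quadrilaterals gives a point on the $h_i'$-side and a point on the $g_i h_i g_i^{-1}$-configuration within $2\delta$, and then a standard argument forces an initial segment of $g_i$ of length $|g_i| - \lambda$ to $2\delta$-fellow-travel a corresponding segment along the far side. Fifth — and this is where quasiconvexity enters essentially — apply Remark \ref{R:ShiftQuasiconv} to the geodesic labelled $h_i'$ to produce a bounded connecting path $c_i$, $|c_i|\le K$, and conclude $g_i = u_i v_i$ (as words along the geodesic) with $u_i \in H$ and $|v_i|$ bounded by a constant $L = L(K,\delta)$, \emph{unless} $g_i$ was already short. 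Sixth, since $g_i$ is shortest in $Hg_iH$, the factor $u_i$ must be trivial up to bounded error, so $|g_i| \le L$. Seventh, there are at most $(2|X|)^{L+1}$ elements of length $\le L$, hence at most that many double cosets, giving $n \le N := (2|X|)^{L+1} + 1$ and finiteness of the weak width.

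The main obstacle I anticipate is Step five and six: carefully turning the geometric fellow-traveling statement into the algebraic conclusion that $g_i$ factors as (element of $H$) $\cdot$ (bounded word). The subtlety is that fellow-traveling of two sides of the quadrilateral only tells us the two $g_i$-labelled geodesics stay close; to conclude $g_i \in Hg_i'$ with $|g_i'|$ small I need to read off a \emph{label} identity, which requires invoking quasiconvexity of $H$ along the $h_i$-side (so that initial subwords of $h_i$, read from the right basepoint, lie within $K$ of $H$) and then chaining these bounded correction paths — exactly the kind of ``staircase'' estimate that makes the \cite{G-M-R-S} argument work. A secondary point to handle cleanly is that we only need \emph{one} large $h_i$ per conjugate (not a coherent intersection of many conjugates as in width/height), so the argument is genuinely simpler here than the width argument, and I would emphasize that simplification: the entire difficulty is pushed into a single pairwise estimate between $H$ and one conjugate, with no simultaneous control needed. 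Finally I should double-check that ``strongly essentially distinct'' (distinct double cosets) is the right equivalence so that the counting in Step seven is valid — which it is, precisely by Remark \ref{R:ShortRep}.
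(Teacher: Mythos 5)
Your global architecture coincides with the paper's: normalize each $g_i$ to be shortest in its double coset $Hg_iH$ (legitimate by Remark \ref{R:ShortRep}), show that infiniteness of $H\cap g_i^{-1}Hg_i$ forces $|g_i|$ to be bounded by a constant depending only on $K$ and $\delta$, and then count group elements of bounded length; strong essential distinctness is exactly what makes a count of double cosets translate into a bound on the weak width. The genuine problem is the geometric mechanism in Steps 4--6. With $h_i$ chosen long, the two $g_i$-labelled sides of the quadrilateral with vertices $1,\ g_i,\ g_ih_i,\ h_i'$ are the short sides and sit at distance roughly $|h_i|$ from each other, so they do not fellow-travel, and there is no reason for an initial segment of the $g_i$-side of length $|g_i|-\lambda$ to stay $2\delta$-close to ``the far side''; what $\delta$-thinness actually gives is that every point of the $h_i'$-side lying at distance greater than $|g_i|+2\delta$ from both of its endpoints is $2\delta$-close to the $h_i$-side. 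Consequently the algebraic output of the picture, after applying Remark \ref{R:ShiftQuasiconv} to both long sides, is an element of the double coset $Hg_iH$ of length at most $2K+2\delta$, i.e. $g_i\in HbH$ with $|b|\le 2K+2\delta$ --- not the single-coset factorization $g_i=u_iv_i$, $u_i\in H$, $|v_i|\le L$, asserted in Step 5. That stronger claim is not available from the geometry (the corner $g_i$ of the quadrilateral need not be anywhere near $H$), so Step 6 as written has nothing to act on.

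The repair is immediate and is exactly the paper's Lemma \ref{L:SmallIntersection}: since you already took $g_i$ shortest in $Hg_iH$, the double-coset statement alone gives $|g_i|\le 2K+2\delta$, and your counting step then bounds the number of admissible double cosets by the number of elements of length at most $2K+2\delta$, which is precisely the paper's proof of Theorem \ref{T:FiniteWeakWidth}. You are right that only a single pairwise estimate is needed (one long $h_i$ per conjugate), which is why weak width requires no simultaneous control of several conjugates; but as proposed, the key Step 5 would fail and must be replaced by the double-coset version just described.
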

\begin{proof}
As $G$ is finitely generated, there exists a finite number $N$ of elements in $G$ of length not greater  than $2K+ 2 \delta$, hence there exist at most $N$ strongly $H$-essentially distinct elements $\{ g_i \in G \}$ such that the shortest representative of the double coset
 $Hg_iH$ is not longer than  $2K+ 2 \delta$. Then Lemma \ref{L:SmallIntersection} implies that the only strongly $H$-essentially distinct conjugates of 
$H$ which might have infinite intersection with $H$ are the conjugates of $H$ by the elements in the set $\{ g_i | 1 \le i \le N \}$. Therefore $WeakWidth(H,G) \le N$.
\end{proof}

\begin{lemma}\label{L:SmallIntersection}
Let $H$ be a $K$-quasiconvex  subgroup of a
$\delta$-negatively curved group $G$ and $g$ be an element in $G$. If every element of  
the double coset $HgH$ is longer than $2K + 2 \delta$, then the intersection
$H \cap g^{-1}Hg$ is finite.
\end{lemma}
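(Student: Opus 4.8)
The plan is to argue by contradiction. Suppose $H\cap g^{-1}Hg$ is infinite. Since $G$ is finitely generated, $Cayley(G)$ is locally finite, so this subgroup contains elements of arbitrarily large word length; I would fix such an element $f$ with $|f|$ as large as needed, in particular $|f|>2|g|+4\delta$, and set $h:=gfg^{-1}$. Because $f\in g^{-1}Hg$ we have $h\in H$, and the relation $gf=hg$ holds in $G$. The goal is to extract from $f$ an element of $HgH$ of length at most $2K+2\delta$, which contradicts the hypothesis.

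First I would assemble the geodesic quadrilateral in $Cayley(G)$ realizing $gf=hg$: its consecutive vertices are $1$, $g$, $gf$, $h$, and its four sides are geodesics $\sigma_1=[1,g]$, $\sigma_2=[g,gf]$, $\sigma_3=[gf,h]$, $\sigma_4=[h,1]$. Here $\sigma_2$ is the left translate by $g$ of a geodesic $[1,f]$, and since $gf=hg$ one has $d(gf,h)=|g|$ with $\sigma_3$ an $h$-translate of a geodesic $[g,1]$, so $|\sigma_1|=|\sigma_3|=|g|$. The key feature is that two opposite sides are controlled by $H$: since $1,f\in H$ and $H$ is $K$-quasiconvex, $[1,f]$ lies in the $K$-neighborhood of $H$, hence $\sigma_2$ lies in the $K$-neighborhood of $gH$; and since $1,h\in H$, the side $\sigma_4$ lies in the $K$-neighborhood of $H$. (This uses only equivariance of $Cayley(G)$ together with quasiconvexity; compare Remark~\ref{R:ShiftQuasiconv}.)

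Then I would invoke $\delta$-hyperbolicity: a geodesic quadrilateral is $2\delta$-thin, so every point of $\sigma_2$ is within $2\delta$ of $\sigma_1\cup\sigma_3\cup\sigma_4$. Because $|\sigma_2|=|f|>2|g|+4\delta$, I can pick a vertex $x$ on $\sigma_2$ lying at distance greater than $|g|+2\delta$ from each endpoint $g$ and $gf$ of $\sigma_2$. Every point of $\sigma_1$ is within $|g|$ of $g$, and every point of $\sigma_3$ is within $|g|$ of $gf$, so $x$ can be within $2\delta$ of neither $\sigma_1$ nor $\sigma_3$; hence $x$ lies within $2\delta$ of $\sigma_4$, and therefore within $2\delta+K$ of $H$. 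Since $x$ also lies within $K$ of $gH$, there exist $h_1,h_2\in H$ with $d(gh_1,h_2)\le 2K+2\delta$, i.e.\ $|h_2^{-1}gh_1|\le 2K+2\delta$; as $h_2^{-1}gh_1\in HgH$, this is the promised contradiction. (Equivalently, the same estimate shows that every $f\in H\cap g^{-1}Hg$ has word length bounded in terms of $|g|$ and $\delta$, so $H\cap g^{-1}Hg$ sits inside a finite ball of $Cayley(G)$.)

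The step I expect to be the main obstacle is the length bookkeeping in the last paragraph: one must choose $f$ long enough that the near-midpoint vertex of the long side $\sigma_2$ is forced onto $\sigma_4$ rather than onto the two short sides $\sigma_1,\sigma_3$, and the translated geodesics must be arranged so that quasiconvexity of $H$ genuinely applies to $\sigma_2$ (through the coset $gH$) and to $\sigma_4$ (through $H$ itself). The remaining ingredients — local finiteness of $Cayley(G)$ producing arbitrarily long elements of an infinite subgroup, and the $2\delta$-thinness of geodesic quadrilaterals — are routine.
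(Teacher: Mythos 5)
Your proof is correct, and it reaches the conclusion by a noticeably different arrangement of the same geometric ingredients. Both arguments build a geodesic quadrilateral recording $gf=hg$ (in the paper, $h=g_0^{-1}h_0g_0$) and play $2\delta$-thinness against $K$-quasiconvexity of the two sides controlled by $H$. The paper first replaces $g$ by a shortest representative $g_0$ of the double coset $HgH$ (via Remark \ref{R:ShortRep}), places its test vertex at the midpoint of the side labelled by the intersection element $h$, uses the double-coset hypothesis to push that vertex away from the $h_0$-side, and then invokes minimality of $g_0$ in the coset $g_0H$ to convert proximity to a short side into the uniform bound $|h|<2K+8\delta+2$, independent of $g$. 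You instead work with $g$ itself, put the test vertex near the middle of the $gH$-side $\sigma_2$, rule out the two short sides purely by length bookkeeping (each has length $|g|$, and your vertex is farther than $|g|+2\delta$ from their nearby endpoints, so no minimality of any representative is needed), and hence land within $2\delta+K$ of $H$ while staying within $K$ of $gH$; this produces an element of $HgH$ of length at most $2K+2\delta$, contradicting the hypothesis. What each approach buys: yours is shorter, argues by contradiction, and avoids both coset-representative reductions, but the resulting bound on elements of $H\cap g^{-1}Hg$ depends on $|g|$ (roughly $2|g|+4\delta$), whereas the paper's bound $2K+8\delta+2$ is uniform in $g$; for Lemma \ref{L:SmallIntersection} and its use in Theorem \ref{T:FiniteWeakWidth} only finiteness is needed, so either version suffices. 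One small bookkeeping point: to guarantee a vertex of $\sigma_2$ strictly farther than $|g|+2\delta$ from both endpoints you should take, say, $|f|>2|g|+4\delta+2$, which your ``as large as needed'' choice already permits.
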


\begin{proof}
Remark \ref{R:ShortRep} implies that it is sufficient to prove Lemma  \ref{L:SmallIntersection} for a shortest representative $g_0$ of the double coset $HgH$.

 We will show that all the elements in the intersection $H \cap g_0^{-1}Hg_0$ are shorter than $2K+8 \delta+2$, so that the intersection is finite, as required. 
 
Consider $h \in H \cap g_0^{-1} H g_0$. Let  $h_0 \in H$  be such that 
$h=g_0^{-1}h_0g_0$.
Let $p_1,p_{h_0}, p_2$ and  $p_h$ be geodesics in  $Cayley(G)$ such that $p_1p_{h_0}p_2 \bar p_h$ is a closed path, $p_1$ (hence also $p_h$) begins at $1$, $Lab(p_1)= g_0^{-1}, Lab(p_{h_0})= h_0, Lab(p_2)= g_0$, and $ Lab(p_h)= h = g_0^{-1}h_0g_0$. 

Let $v$ be a middle  vertex of $p_h$ and let $q$ be the initial subpath of $p_h$ ending at $v$. As $Lab(p_h) = h \in H$, Remark \ref{R:ShiftQuasiconv} implies that there exists a path $s$ with $|s| \le K$ which begins at $v$ such that $Lab(qs) \in H$. 
Let $t$ be a shortest path which begins at $v$ and ends at some vertex $w$ of $p_{h_0}$ and let $q'$ be the initial subpath
of $ p_{h_0}$ terminating at $w$.
As $Lab(p_{h_0}) = h_0 \in H$, Remark \ref{R:ShiftQuasiconv} implies that there exists a path $s'$ with $|s'| \le K$ which begins at $w$ such that $Lab(q's') \in H$. 
Then  $Lab(\bar p_1)= g_0 =Lab(q's')Lab(\bar s' \bar t s) Lab(\bar s \bar q)$.
As $Lab(q's') \in H$ and $Lab(\bar s \bar q)= Lab^{-1}(qs) \in H$, it follows that $Lab (s' \bar t s) \in
Hg_0H$.

As $HgH = Hg_0H$, the assumption of Lemma \ref{L:SmallIntersection} that any element of the double coset $HgH$ is longer than $2K + 2 \delta$ implies that $|s'\bar ts| > 2K + 2 \delta$. 
But then $|t| > 2K +2 \delta -|s|-|s'| > 2 \delta $, hence the distance from $v$ to $p_{h_0}$ is greater than $2 \delta$.

As $G$ is $\delta$-negatively curved, a side $p_h$ of the geodesic $4$-gon  $p_1p_{h_0}p_2 \bar p_h$ belongs to the $2 \delta$-neighborhood of the union of the other
three sides, so the above discussion implies that $v$ belongs to the $2\delta$-neighborhood of $p_1 \cup  p_2$. Assume that there exists a path $y$ of length less than $2 \delta$
which begins at a vertex $u$ of $p_1$ and ends at $v$. Consider the decomposition $p_1=p'_1p''_1$, where $p'_1$ ends at $u$. As
$g_0=Lab(\bar p_1)=Lab(\bar p''_1 ys) Lab (\bar s \bar q)$ and $Lab(\bar s \bar q) \in H$, it follows that $Lab(\bar p''_1 ys) \in g_0H$. As $g_0$ is a shortest representative of $g_0H$, it follows that  $|g_0|=|p'_1|+|p''_1| \le |\bar p''_1 ys|=|s| +|y| + |p''_1|$.  Hence   
$|p'_1| \le |s| +|y| \leq K +2 \delta$, so 
$|q| \le |p'_1| + |y| \le K +4\delta$. However, as $v$ is a middle point of $p_h$,  $|h|=|p_h| \leq 2|q|+1 <
2K+8\delta +2$.

Similarly, if $v$ belongs to the $2 \delta$-neighborhood of $p_2$, 
it follows that $|h| < 2K+8\delta +2$, proving Lemma \ref{L:SmallIntersection}.
\end{proof}

\section{Question}
Is their a simple relation between the width and the weak width?

\textbf{Acknowledgment}

The author would like to thank Shmuel Weinberger for his support.

\end{document}